\documentclass[12pt]{article}
\usepackage{amssymb,amsfonts,amsmath,amsthm,comment,dsfont,xcolor}
\usepackage[colorlinks, citecolor=purple,linkcolor=blue,urlcolor=green]{hyperref}
\usepackage[lite,alphabetic]{amsrefs}
\usepackage[a4paper, portrait, margin=23mm]{geometry}

\newcommand{\1}{\mathds{1}}

\newcommand{\N}{\mathbb{N}}

\newcommand{\Bo}{\mathrm{B}}

\newcommand{\8}{\infty}

\newcommand{\supp}{\mathrm{supp}}
\newcommand{\Co}{\mathcal{C}}

\newcommand{\Po}{\mathcal{P}}

\newcommand{\Io}{\mathcal{I}}
\newcommand{\Jo}{\mathcal{J}}

\newcommand{\toru}{\xrightarrow[]{\mathrm{ru}}}

\newcounter{dummy} 
\newtheorem{theorem}[dummy]{Theorem}

\newtheorem{proposition}[dummy]{Proposition}

\theoremstyle{remark}
\newtheorem{remark}[dummy]{Remark}

\begin{document}

\title{Normed lattices majorizing in their norm completions}

\author{Eugene Bilokopytov\thanks{Instituto de Ciencias Matem\'aticas, Madrid, Spain (\texttt{bilokopy@ualberta.ca}). This research has been partially supported by grant PCI2024-155094-2 funded by MICIU/AEI/10.13039/50110001103}~ and Viktor Bohdanskyi\thanks{Department of Analysis and Probability, Ukrainian National Technical University ``Igor Sikorsky Kyiv Polytechnic Institute'' (\texttt{viktorbohdanskyi@gmail.com}).}}
\maketitle

\begin{abstract}
This note is a follow-up to \cite{bt}. We focus on conditions under which a normed lattice $X$ is majorizing in its norm completion. We show that \cite[Question 8.17]{bt} -- namely, whether this holds whenever every norm-null sequence in $X$ has an order-bounded subsequence -- is equivalent to the question whether every P-ideal on $\N$ is meager. This is a longstanding open problem in Set Theory, and it has a negative answer under various set-theoretical assumptions, in particular under the Continuum Hypothesis.

We also present several equivalent conditions to both of the two aforementioned properties, and give a simple proof of a well-known Riesz-Fischer-style characterization of completeness of a normed lattice.\medskip

\emph{Keywords:} Normed lattices, relative uniform convergence, P-ideals, non-meager ideals.

MSC2020 46A19, 46A40, 46B42, 54G99, 03E05, 03E35.
\end{abstract}\bigskip

We start with a simplified unified proof of \cite[Theorems 5.14 and 5.15]{ab0}. The authors would like to thank Vladimir Troitsky for some ideas that were used in this proof. The condition (v) below is called the \emph{Riesz-Fischer property}.

\begin{theorem}\label{rf}
For a normed lattice $X$ the following conditions are equivalent:
\item[(i)] The norm on $X$ is complete, that is, $X$ is a Banach lattice;
\item[(ii)] Every increasing norm-Cauchy sequence in $X$ converges;
\item[(iii)] Every increasing norm-Cauchy sequence in $X$ has a supremum;
\item[(iv)] If $\left(y_{n}\right)_{n\in\N}\subseteq X_{+}$, and $\sum\limits_{n\in\N}\|y_{n}\|<\8$, then there is $\bigvee\limits_{n\in\N} \sum\limits_{k=1}^{n}y_{k}$;
\item[(v)] If $\left(y_{n}\right)_{n\in\N}\subseteq X_{+}$, and $\sum\limits_{n\in\N}\|y_{n}\|\le 1$, then there is $\bigvee\limits_{n\in\N} \sum\limits_{k=1}^{n}y_{k}$, which is an element of $\Bo_{X}$.
\end{theorem}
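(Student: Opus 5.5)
We prove the cycle (i)$\Rightarrow$(ii)$\Rightarrow$(iii)$\Rightarrow$(iv)$\Rightarrow$(v)$\Rightarrow$(i). Throughout we use that $X_{+}$ is norm closed, and that (v) is positively homogeneous: if $\sum\|y_n\|\le c$, apply (v) to $y_n/c$ and rescale to get a supremum of norm at most $c$.

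\emph{The first three implications.} (i)$\Rightarrow$(ii) is immediate. For (ii)$\Rightarrow$(iii), let $x_n\uparrow$ with $x_n\to x$ in norm. For fixed $m$ we have $x_n-x_m\in X_+$ for all $n\ge m$. Letting $n\to\8$ and using closedness of $X_+$ gives $x\ge x_m$. If $z\ge x_n$ for all $n$, the same argument gives $z\ge x$. Hence $x=\bigvee x_n$. For (iii)$\Rightarrow$(iv), note that the partial sums $s_n=\sum_{k=1}^n y_k$ are increasing. They are norm-Cauchy, since $\|s_m-s_n\|\le\sum_{k>n}\|y_k\|$.

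\emph{(iv)$\Rightarrow$(v), the main obstacle.} Let $\sum\|y_n\|\le1$. By (iv), $s=\bigvee s_n$ exists, but we must show $\|s\|\le1$. This is not automatic, since $s-s_n\downarrow0$ need not be norm-null. The trick is to apply (iv) to an accelerated series.

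Choose $n_1<n_2<\dots$ with $\sum_{k>n_m}\|y_k\|\le 2^{-m}$, and set $d_m=s_{n_{m+1}}-s_{n_m}\ge0$. Then $\sum_m\|m d_m\|\le\sum m2^{-m}<\8$, so (iv) gives $w=\bigvee_N\sum_{m\le N} m d_m$.

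Fix $m$. For every $N\ge m$,
\[ s_{n_{N+1}}-s_{n_m}=\sum_{k=m}^{N}d_k\le \tfrac1m\sum_{k=m}^N k d_k\le \tfrac1m w. \]
Every $s_n$ is dominated by some $s_{n_{N+1}}$, so taking the supremum over $N$ yields $s-s_{n_m}\le w/m$. Since $0\le s-s_{n_m}$, the lattice norm gives
\[ \|s\|\le\|s_{n_m}\|+\|w\|/m\le 1+\|w\|/m. \]
Letting $m\to\8$ gives $\|s\|\le1$.

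\emph{(v)$\Rightarrow$(i).} It suffices to show that a Cauchy sequence has a convergent subsequence. So take $(x_n)$ with $\|x_{n+1}-x_n\|\le2^{-n}$, and put $u_n=(x_{n+1}-x_n)^+$ and $v_n=(x_{n+1}-x_n)^-$. Both have norm at most $2^{-n}$.

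By homogeneous (v), the suprema
\[ a_m=\bigvee_N\sum_{k=m}^N u_k,\qquad b_m=\bigvee_N\sum_{k=m}^N v_k \]
exist, with $\|a_m\|,\|b_m\|\le 2^{1-m}$. Since translation preserves suprema, $a_m=u_m+a_{m+1}$ and $b_m=v_m+b_{m+1}$. Using $x_{m+1}-x_m=u_m-v_m$, this gives
\[ x_m+a_m-b_m=x_{m+1}+a_{m+1}-b_{m+1}. \]
Thus $x:=x_m+a_m-b_m$ does not depend on $m$. Finally $\|x-x_m\|\le 2^{2-m}\to0$, so $x_m\to x$ and $X$ is complete.
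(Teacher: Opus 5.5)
Your proof is correct. Its engine is the same as the paper's Claim: a tail $\sum_{k\ge m}$ is dominated by $\frac{1}{m}$ times the supremum of the weighted partial sums. The organization differs, though.

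The paper applies this device in (iv)$\Rightarrow$(i), which needs the hypothesis $\sum n\|y_{n}\|<\8$. It secures that by passing to a subsequence with $\|x_{n}-x_{n-1}\|\le\frac{1}{n2^{n}}$, and shows that the supremum of the partial sums of $(x_{n}-x_{n-1})^{\pm}$ is their norm sum. It then gets (v) separately from (i), using continuity of the norm and a cited fact that a norm limit of an increasing sequence is its supremum.

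You place the device in (iv)$\Rightarrow$(v) instead. Your blocking $d_{m}=s_{n_{m+1}}-s_{n_{m}}$ removes the weighted hypothesis. You get $0\le s-s_{n_{m}}\le\frac{1}{m}w$ for an arbitrary positive norm-summable series, which in effect shows the supremum is always the norm sum, slightly more than the paper's Claim. After that, (v)$\Rightarrow$(i) needs only the norm bounds on the tail suprema $a_{m},b_{m}$ and the invariant $x_{m}+a_{m}-b_{m}$. So you never have to identify suprema with norm limits a second time.

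Your route gives a single closed cycle that is fully self-contained; you also prove (ii)$\Rightarrow$(iii) directly from closedness of $X_{+}$ rather than citing it. The paper's route gives a shorter (iv)$\Rightarrow$(i) with no blocking step, at the cost of an extra implication (i)$\Rightarrow$(v) that rests on an external result.
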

\begin{proof}
(i)$\Rightarrow$(ii) and (v)$\Rightarrow$(iv) are trivial. (ii)$\Rightarrow$(iii) follows from \cite[Theorem 2.21(c)]{ab0}. To get (iii)$\Rightarrow$(iv) it is enough to consider $x_{n}:=\sum\limits_{k=1}^{n}y_{k}$. (i)$\Rightarrow$(v) follows from the triangle inequality, continuity of the norm and \cite[Theorem 2.21(c)]{ab0}.\medskip

(iv)$\Rightarrow$(i): \textbf{Claim.} If $\left(y_{n}\right)_{n\in\N}\subseteq X_{+}$ is such that $\sum\limits_{n\in\N} n\|y_{n}\|<\8$, and $z:=\bigvee\limits_{n\in\N} \sum\limits_{k=1}^{n}y_{k}$ (which exists by the assumption), then $z=\sum\limits_{n\in\N}y_{n}$.

Let $u:=\bigvee\limits_{n\in\N} \sum\limits_{k=1}^{n}ky_{k}$ (which exists by the assumption). Fix $m\in\N$ and define $z_{m}:=\bigvee\limits_{n\ge m} \sum\limits_{k=m}^{n}y_{k}$ (which also exists by the assumption). For every $n\ge m$ we have that $m\sum\limits_{k=m}^{n}y_{k}\le \sum\limits_{k=m}^{n}ky_{k}\le u$, and so $z_{m}\le \frac{1}{m}u$. For every $m\in\N$ distributivity of the sum yields $$z=\bigvee\limits_{n\ge m} \sum\limits_{k=1}^{n}y_{k}=\bigvee\limits_{n\ge m} \left(\sum\limits_{k=1}^{m-1}y_{k}+\sum\limits_{k=m}^{n}y_{k}\right)=z_{m}+\sum\limits_{k=1}^{m-1}y_{k},$$ and so $0\le z-\sum\limits_{k=1}^{m-1}y_{k}=z_{m}\le \frac{1}{m}u$. It follows that $\left\|z-\sum\limits_{k=1}^{m-1}y_{k}\right\|\le\frac{1}{m}\|u\|\to 0$, thus justifying $z=\sum\limits_{n\in\N}y_{n}$.\medskip

Now let $\left(x_{n}\right)_{n\in\N}\subseteq X$ be a norm-Cauchy sequence. In order to show that it has a limit it is enough to find a norm-convergent subsequence. Hence, without loss of generality we may assume that $\|x_{n}-x_{n-1}\|\le\frac{1}{n2^{n}}$, for every $n\in\N$ (we count $x_{0}:=0$). For $n\in\N$ let $v_{n}:=\left(x_{n}-x_{n-1}\right)^{+}$ and $w_{n}:=\left(x_{n}-x_{n-1}\right)^{-}$; note that $\|v_{n}\|,\|w_{n}\|\le \left\|\left|x_{n}-x_{n-1}\right|\right\|=\|x_{n}-x_{n-1}\|\le\frac{1}{n2^{n}}$.

Since $\left(v_{n}\right)_{n\in\N}$ and $\left(w_{n}\right)_{n\in\N}$ satisfy the assumptions of the claim, $v:=\sum\limits_{n\in\N}v_{n}$ and $w:=\sum\limits_{n\in\N}w_{n}$ exist in $X$. Note that for every $m\in\N$ we have $\sum\limits_{n=1}^{m}\left(v_{n}-w_{n}\right)=\sum\limits_{n=1}^{m}\left(x_{n}-x_{n-1}\right)=x_{m}$. Hence, $x_{m}\to v-w$.
\end{proof}

For $u\in X_{+}$ the principal ideal $I_{u}$ is endowed with the \emph{order unit norm} $\|\cdot\|_{u}$ defined by $\|x\|_{u}=\bigwedge\left\{r\ge 0:~ \left|x\right|\le r u\right\}$. A net $\left(x_{\alpha}\right)_{\alpha\in A}\subseteq X$ converges \emph{uniformly} to $x$ \emph{relative} to $u\in X_{+}$ if $x\in I_{u}$ and for every $\varepsilon>0$ there is $\alpha_{0}$ such that $\|x-x_{\alpha}\|_{u}\le\varepsilon$ (that is, $\left|x-x_{\alpha}\right|\le\varepsilon u$, and in particular $x_{\alpha}\in I_{u}$), for every $\alpha\ge \alpha_{0}$. We denote it by $x_{\alpha}\xrightarrow[]{\|\cdot\|_{u}}x$ and say that $\left(x_{\alpha}\right)_{\alpha\in A}\subseteq X$ converges \emph{relatively uniformly (ru)} to $x$, if $x_{\alpha}\xrightarrow[]{\|\cdot\|_{u}}x$, for some $u$. See \cite{bt} for more information about relative uniform convergence.\medskip

Let us add more equivalent conditions to \cite[Theorem 8.15]{bt}. Note that the conditions (iii)-(v) below are weakened versions of the corresponding conditions in Theorem \ref{rf}.

\begin{proposition}\label{arf}
For a normed lattice $X$ the following conditions are equivalent:
\item[(i)] $X$ is majorizing in its norm completion $\widetilde{X}$;
\item[(ii)] Every norm-Cauchy sequence in $X$ has an order bounded subsequence;
\item[(iii)] Every increasing norm-Cauchy sequence in $X$ is order bounded;
\item[(iv)] Every norm-summable sequence in $X$ is order bounded;
\item[(v)] If $\left(y_{n}\right)_{n\in\N}\subseteq X_{+}$, and $\sum\limits_{n\in\N}\|y_{n}\|< 1$, then there is $x\in \Bo_{X}$ such that $x\ge \sum\limits_{k=1}^{n}y_{k}$, for every $n\in\N$;
\item[(vi)] Every element of $\widetilde{X}$ is a relative uniform limit in $\widetilde{X}$ of a sequence in $X$ with a regulator in $X$.
\end{proposition}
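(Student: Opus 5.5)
We go around a cycle of implications. The one device used repeatedly is that a geometrically fast sequence of differences can be dominated by a single element of $X$. We then control the norm of the resulting majorant by discarding finitely many terms.

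\textbf{(i)$\Rightarrow$(ii).} Let $(x_{n})$ be norm-Cauchy in $X$ with limit $\tilde x\in\widetilde{X}$. Pass to a subsequence with $\|x_{n}-\tilde x\|\le 2^{-n}$. Since $\widetilde{X}$ is a Banach lattice, $w:=\sum_{n}|x_{n}-\tilde x|$ exists in $\widetilde{X}$. Then $|x_{n}|\le|\tilde x|+w$ for every $n$, and by (i) the element $|\tilde x|+w$ is dominated by an element of $X$.

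\textbf{(ii)$\Rightarrow$(iii).} If an increasing sequence has an order bounded subsequence, then each term lies below a later term of that subsequence. Hence the whole sequence is order bounded.

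\textbf{(iii)$\Rightarrow$(iv).} If $\sum\|y_{n}\|<\infty$, then the partial sums of $(|y_{n}|)$ form an increasing norm-Cauchy sequence. By (iii) they are bounded by some $x\in X$, and then $|y_{n}|\le x$ for all $n$.

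\textbf{(iv)$\Rightarrow$(i).} This is the key step. Take $\tilde x\in\widetilde{X}$ and $x_{n}\in X$ with $x_{n}\to\tilde x$ and $\|d_{n}\|\le 4^{-n}$, where $d_{n}:=x_{n}-x_{n-1}$ and $x_{0}:=0$. The sequence $(2^{n}d_{n})$ is norm-summable, so by (iv) there is $u\in X_{+}$ with $|d_{n}|\le 2^{-n}u$. Summing in $\widetilde{X}$ gives $|\tilde x|\le\sum_{n}|d_{n}|\le u$. Hence $X$ is majorizing.

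\textbf{(i)$\Rightarrow$(v).} Let $y_{n}\ge0$ with $\sum\|y_{n}\|=s<1$. Put $z:=\sum y_{n}\in\widetilde{X}$, so $\|z\|\le s$ and every partial sum is $\le z$. Using (i) exactly as in the previous step, write $z=\lim x_{n}$ with differences $d_{n}$ as above, and choose $u\in X_{+}$ with $|d_{n}|\le 2^{-n}u$. For $N\in\N$ we get
$$z=x_{N}+\sum_{n>N}d_{n}\le x_{N}^{+}+2^{-N}u=:x\in X.$$
Moreover $\|x\|\le\|z\|+\|z-x_{N}\|+2^{-N}\|u\|$, using $\|x_{N}^{+}\|\le\|x_{N}\|$. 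The right-hand side is $<1$ once $N$ is large. Note that $u$ is fixed before $N$ is chosen; this is precisely what makes the tail norm small.

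\textbf{(v)$\Rightarrow$(iv).} Given a norm-summable sequence $(y_{n})$, apply (v) to $(c|y_{n}|)$ for a small scalar $c>0$. The resulting $x$ satisfies $c|y_{n}|\le x$, so $|y_{n}|\le x/c$ for all $n$.

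\textbf{(i)$\Rightarrow$(vi).} With $x_{n}$, $d_{n}$ and $u$ as in the step (iv)$\Rightarrow$(i) (now available from (i)), we have $|\tilde x-x_{n}|\le\sum_{k>n}|d_{k}|\le 2^{-n}u$. So $x_{n}\to\tilde x$ relatively uniformly with regulator $u\in X$.

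\textbf{(vi)$\Rightarrow$(i).} If $x_{n}\xrightarrow[]{\|\cdot\|_{u}}\tilde x$ with $u\in X$, then for large $n$ we have $|\tilde x|\le|x_{n}|+u\in X$. Hence $X$ is majorizing.

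The main obstacle is (i)$\Rightarrow$(v), because the majorant must come with a norm bound. Plain majorization gives no control on the norm, so the plan is to split off a finite partial sum and dominate only the tail, with a regulator fixed in advance.
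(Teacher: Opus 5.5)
Your proof is correct and complete, but it is organized differently from the paper's. The paper does not prove most of these implications itself. It imports the equivalence of (i)--(iii) and (vi) from \cite[Theorem 8.15]{bt} and only closes the loop with (vi)$\Rightarrow$(v)$\Rightarrow$(iv)$\Rightarrow$(iii). In particular, it uses the $4^{-n}$/$2^{n}$ summability trick only on an increasing Cauchy sequence, to get (iv)$\Rightarrow$(iii).

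You apply the same trick to an approximating sequence of an arbitrary $\tilde x\in\widetilde{X}$. In one stroke this yields $u\in X_{+}$ with $|\tilde x|\le u$ and $|\tilde x-x_{n}|\le 2^{-n}u$, which gives both (i) and (vi). Together with your short direct proofs of (i)$\Rightarrow$(ii)$\Rightarrow$(iii)$\Rightarrow$(iv) and (vi)$\Rightarrow$(i), this reproves the relevant part of \cite[Theorem 8.15]{bt} in a few lines, and makes the proposition independent of that reference.

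Your (i)$\Rightarrow$(v) is essentially the paper's (vi)$\Rightarrow$(v), with the relatively uniform approximation built inline and the regulator fixed before the index is chosen. The difference is only in the norm estimate:
\begin{itemize}
\item The paper uses the order sandwich $0\le y\le x_{m}+\frac{1}{m}u\le y+\frac{2}{m}u$. This gives $\|x_{m}+\frac{1}{m}u\|\le\|y\|+\frac{2}{m}\|u\|$ and an explicit choice of $m$, without invoking $\|y-x_{m}\|\to 0$.
\item You use the triangle inequality together with $\|z-x_{N}\|\to 0$. Passing to $x_{N}^{+}$ is harmless but unnecessary, since $\|x_{N}+2^{-N}u\|$ admits the same bound.
\end{itemize}
The paper's route is shorter given the citation. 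Yours is self-contained and shows that one summability device drives every nontrivial implication.
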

\begin{proof}
Equivalence of (i)-(iii) and (vi) was proven in \cite[Theorem 8.15]{bt}. (v)$\Rightarrow$(iv) is trivial.

(iv)$\Rightarrow$(iii): Let $\left(y_{n}\right)_{n\in\N}\subseteq X$ be an increasing norm-Cauchy sequence. Note that it is enough to show that it has an order bounded subsequence, and so we may assume that $\|x_{n}\|\le \frac{1}{4^{n}}$, where $x_{n}:=y_{n}-y_{n-1}$, for $n\in\N$ (we count $y_{0}:=0$). Then, $\left(2^{n}x_{n}\right)_{n\in\N}$ is norm-summable, and so it has an upper bound $y$, so that $x_{n}\le\frac{1}{2^{n}}y$, for every $n\in\N$. Thus, $y_{n}=\sum\limits_{k=1}^{n}x_{k}\le \sum\limits_{k=1}^{n}\frac{1}{2^{k}}y\le y$, for every $n\in\N$.\medskip

(vi)$\Rightarrow$(v): Let $y:=\sum\limits_{n\in\N}y_{n}$ in $\widetilde{X}$. Note that $y\ge 0$ and $\|y\|\le \sum\limits_{n\in\N}\|y_{n}\|< 1$. By assumption there is $u\in X_{+}$ and $\left(x_{n}\right)_{n\in\N}\subseteq X$ such that $\left|y-x_{n}\right|\le\frac{1}{n}u$, for every $n\in\N$. Let $m\in\N$ be such that $\frac{2}{m}\|u\|\le 1-\|y\|$. We claim that $x_{m}+\frac{1}{m}u\in X$ satisfies the requirements. Indeed, for every $n\in\N$ we have $\sum\limits_{k=1}^{n}y_{k}\le y\le x_{m}+\frac{1}{m}u$. On the other hand, $0\le y\le x_{m}+\frac{1}{m}u\le y + \frac{2}{m}u$, which yields $\|x_{m}+\frac{1}{m}u\|\le \left\|y + \frac{2}{m}u\right\|\le \|y\|+\frac{2}{m}\|u\|\le 1$.
\end{proof}

Recall that a set $C\subseteq X$ is \emph{relatively uniformly closed} if every existing ru-limit of a net in $C$ is contained in $C$ (this condition is enough to verify for sequences). Since every ru-convergent sequence norm converges to the same limit, it follows that every norm-closed set (and, in particular, every finite set) is ru-closed. Let us discuss when the two classes of sets coincide. The condition (i) below is a weakened version of the conditions (ii) and (iv) in Proposition \ref{arf}.

\begin{proposition}\label{arfa}
For a normed lattice $X$ the following conditions are equivalent:
\item[(i)] Every norm-null sequence in $X$ has an order bounded subsequence;
\item[(ii)] Every norm-null net in $X$ contains (as a set) a relatively uniformly null sequence;
\item[(iii)] Every relatively uniformly closed set in $X$ is norm-closed.
\end{proposition}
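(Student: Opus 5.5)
I would prove the cycle (i)$\Rightarrow$(ii)$\Rightarrow$(iii)$\Rightarrow$(i). The basic device is the same rescaling trick used in the proof of (iv)$\Rightarrow$(iii) of Proposition \ref{arf}: an order bound for a faster-decaying sequence gives relative uniform control of a slower one.

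\textbf{(i)$\Rightarrow$(ii).} Let $\left(x_{\alpha}\right)_{\alpha\in A}$ be a norm-null net. Choose indices $\alpha_{n}$ with $\|x_{\alpha_{n}}\|\le\frac{1}{n4^{n}}$ (or $\|x_{\alpha_{n}}\|\le \frac{1}{n^{3}}$), so that $z_{n}:=n\left|x_{\alpha_{n}}\right|$ is a norm-null sequence. By (i), some subsequence $\left(z_{n_{k}}\right)$ has an upper bound $u\in X_{+}$. Then $\left|x_{\alpha_{n_{k}}}\right|\le\frac{1}{n_{k}}u$, so $\left(x_{\alpha_{n_{k}}}\right)_{k}$ is ru-null with regulator $u$. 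It is a sequence whose terms lie in the set $\{x_{\alpha}\}$. If the set $\{x_\alpha\}$ contains $0$, or if only finitely many distinct values occur, the constant sequence already works, so nothing degenerate causes trouble.

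\textbf{(ii)$\Rightarrow$(iii).} Let $C$ be ru-closed and let $x\in\overline{C}$. Index the net by the neighbourhood filter: for each $\varepsilon>0$ pick $c_{\varepsilon}\in C$ with $\|c_{\varepsilon}-x\|<\varepsilon$, so that $\left(c_{\varepsilon}-x\right)$ is norm-null. By (ii) there is a sequence $\left(c_{n}-x\right)$ with $c_{n}\in C$ which is ru-null. Hence $c_{n}\toru x$, and $x\in C$ because $C$ is ru-closed.

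\textbf{(iii)$\Rightarrow$(i).} This is the main step. Suppose $\left(x_{n}\right)$ is norm-null and no subsequence is order bounded; we may take $x_{n}\ge0$ by passing to $\left|x_{n}\right|$. If $x_{n}=0$ for infinitely many $n$ we are done, so assume $x_{n}\neq0$. Consider $C:=\{n x_{n}:~n\in\N\}$ after passing to a subsequence with $\|x_{n}\|\le\frac{1}{n^{2}}$. Then $nx_{n}\to0$ in norm and $0\notin C$, so $C$ is not norm-closed. By (iii) it is not ru-closed, so some sequence in $C$ ru-converges to a point outside $C$. Since ru-limits coincide with norm limits, and the only norm accumulation point of $C$ outside $C$ is $0$, this sequence must be ru-null.

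An ru-null sequence in $C$ that is not eventually constant takes infinitely many distinct values $m_{k}x_{m_{k}}$, where $m_{k}\to\infty$. Such a sequence is order bounded by $\varepsilon u$ eventually, so $m_{k}x_{m_{k}}\le u$ for all large $k$. Hence $\left(x_{m_{k}}\right)$ is order bounded, a contradiction.

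An eventually constant ru-null sequence would force that constant to be $0\in C$, which is excluded. Care is needed here, because distinct $n$ could give equal values $nx_{n}$; passing to a subsequence resolves this, since the values $nx_{n}$ that recur infinitely often are bounded anyway. Checking this bookkeeping is the obstacle I expect.
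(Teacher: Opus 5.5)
Your proof is correct and follows the paper's argument step for step. You use the rescaling $n|x_{\alpha_n}|$ for (i)$\Rightarrow$(ii), translate a norm-approximating net by $x$ for (ii)$\Rightarrow$(iii), and use the set of values of a norm-null sequence with no order bounded subsequence for (iii)$\Rightarrow$(i). The differences are only cosmetic. The paper works with $C=\{x_n\}$ directly, so the factor $n$ is unnecessary, and it argues positively that $C$ is ru-closed because its order bounded subsets are finite. Your bookkeeping worry also disappears: a nonzero value cannot occur infinitely often in a norm-null sequence, so the distinct values of an ru-null sequence in $C$ automatically come from distinct indices.
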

\begin{proof}
(i)$\Rightarrow$(ii): Let $\left(x_{\alpha}\right)_{\alpha\in A}\subseteq X$ be norm-null. There are indices $\left(\alpha_{n}\right)_{n\in\N}$ such that $\|x_{\alpha_{n}}\|\le \frac{1}{n2^{n}}$, for every $n\in\N$. Then, $n\left|x_{\alpha_{n}}\right|\to 0$, and so by assumption there is an infinite $M\subseteq\N$ and $y\in X_{+}$ such that $n\left|x_{\alpha_{n}}\right|\le y$, for every $n\in M$. Hence, $\left(x_{\alpha_{n}}\right)_{n\in M}$ converges to $0$ uniformly relative to $y$, and is contained in $\left\{x_{\alpha}\right\}_{\alpha\in A}$.\medskip

(ii)$\Rightarrow$(iii): Let $C\subseteq X$ be relatively uniformly closed. Let $\left(x_{\alpha}\right)_{\alpha\in A}\subseteq C$ be norm-convergent. Without loss of generality we may assume that $x_{\alpha}\to 0$. Then, there are indices $\left(\alpha_{n}\right)_{n\in\N}$ such that $C\ni x_{\alpha_{n}}\toru 0$. As $C$ was assumed to be ru-closed, it follows that $0\in C$. Thus, $C$ contains all of its norm-limits, and so it is norm closed.\medskip

(iii)$\Rightarrow$(i): Let $\left(x_{n}\right)_{n\in\N}$ be a norm-null sequence, which has no order bounded subsequences. In particular, every element appears only finitely many times among $x_{n}$'s, and so by passing to a tail we may assume that $x_{n}\ne 0$, for every $n\in\N$. Every order bounded subset of $C:=\left\{x_{n}\right\}_{n\in\N}$ is finite. Since every ru-convergent sequence is order bounded, it follows that $C$ is ru-closed. By assumption, $C$ is also norm-closed. As $C\ni x_{n}\to 0$, it follows that $0\in C$, contradicting the previous assumption.
\end{proof}

In the following result the condition (i) is a simultaneous strengthening of the conditions (i) and (ii) of Proposition \ref{arfa}. It turns out that such a requirement is very restrictive.

\begin{proposition}\label{rfa}
For a normed lattice $X$ the following conditions are equivalent:
\item[(i)] Every norm-null net in $X$ has an order bounded subnet;
\item[(ii)] Every norm-null net in $X$ has an order bounded tail;
\item[(iii)] $X$ has a strong unit, and the norm is equivalent to the order unit norm.
\end{proposition}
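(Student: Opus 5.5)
The plan is to prove (iii)$\Rightarrow$(ii)$\Rightarrow$(i)$\Rightarrow$(iii). The implication (ii)$\Rightarrow$(i) is immediate, since a tail is a subnet.

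For (iii)$\Rightarrow$(ii), let $e$ be a strong unit with $\|x\|_{e}\le C\|x\|$ for all $x\in X$. If $\left(x_{\alpha}\right)_{\alpha\in A}$ is norm-null, choose $\alpha_{0}$ with $\|x_{\alpha}\|\le 1$ for all $\alpha\ge\alpha_{0}$. Then $\left|x_{\alpha}\right|\le Ce$ on that tail, so the tail is order bounded.

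The substance is (i)$\Rightarrow$(iii). The idea is to feed the hypothesis one cleverly indexed net whose every subnet must, in a sense, exhaust the unit ball. Consider the directed set $A:=\N\times\Bo_{X}$, where $(n,x)\le(m,y)$ iff $n\le m$; this is a preorder that is directed, which suffices for nets. Define $x_{(n,x)}:=\frac{1}{n}x$. This net is norm-null, since $\|x_{(n,x)}\|\le\frac{1}{n}$ for every $(n,x)$ with $n\ge n_{0}$.

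By (i), there is a subnet that is order bounded, say by $u\in X_{+}$. We need to extract from this that $\frac{1}{n}\Bo_{X}\subseteq[-u,u]$ for some $n$. The key point is cofinality. Whichever notion of subnet we use (Willard or Kelley), the image of a subnet is frequently in every tail, in the sense of "eventually in every tail-set". In particular, for the tail $T_{n}:=\{(m,x):m\ge n\}$, the subnet eventually lies in the image of $T_{n}$. That alone only shows that some elements are hit, not all of them.

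To get all of them, we use the structure of the tails. For Willard subnets $\varphi:B\to A$, $\varphi$ is eventually above any given index $(n,x)$. Since the order ignores the second coordinate, "above $(n,x)$" only means the first coordinate is at least $n$, and this does not force the second coordinate to be $x$. So we modify the construction to $A:=\N\times\Bo_{X}$ with $(n,x)\le(m,y)$ iff $n<m$ or $(n,x)=(m,y)$. This is not directed. I therefore expect to instead use the quotient trick directly: aim for the statement that for every $x\in\Bo_{X}$ there is $n$ with $\frac{1}{n}|x|\le u$, with $n$ uniform in $x$.

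I will pick the net that is most robust to this issue. Index it by $A:=\{F\subseteq\Bo_{X}\text{ finite}\}\times\N$, ordered by inclusion and $\le$, and let $x_{(F,n)}:=\frac{1}{n}\bigvee_{x\in F}|x|$, whose norm is at most $\frac{|F|}{n}$. This is not null, so we use the weights $\frac{1}{n|F|}$ instead: $x_{(F,n)}:=\frac{1}{n|F|}\sum_{x\in F}|x|$, which has norm at most $\frac{1}{n}$ and so is norm-null. A subnet bounded by $u$ contains, for each finite $F$, an element $x_{(G,m)}$ with $G\supseteq F$, which shows $|x|\le m|G|u$ for each $x\in F$. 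That gives $u$ a strong-unit-like role (every $x$ lies in $I_{u}$), but not a uniform constant.

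For uniformity, the main obstacle, I would argue by contradiction with a sequence. If $u$ (or any candidate) is not a strong unit with norm domination, then there are $z_{k}\in\Bo_{X}$ with $\|z_{k}\|_{u}\ge k^{3}$. Using the first net construction with the index set $\N\times\N$ and values $x_{(n,k)}:=\frac{1}{n}z_{k}$, the set $\N\times\N$ with the product order is directed, and each $x_{(n,k)}$ is norm $\le\frac1n$. A subnet bounded by some $v$ must be cofinal in both coordinates, hence hits $(n,k)$ with $k$ arbitrarily large relative to $n$? That is not guaranteed either, so I would rather order $\N\times\N$ by $(n,k)\le(n',k')$ iff $n\le n'$ and $k\le k'$, and use the values $\frac{1}{n}z_{k}$ with $k\ge n$ forced via the reindexing $x_{(n,k)}:=\frac{1}{n}z_{\max(n,k)}$.

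Finally, I would combine the last two steps: first obtain a single $u$ with $X=I_{u}$, and then show that $\|\cdot\|_{u}\le C\|\cdot\|$ by applying (i) to the net $\left(\frac{1}{n}x\right)$ indexed as above. The reverse inequality $\|x\|\le\|u\|\,\|x\|_{u}$ is automatic. So the strong-unit and equivalence conclusion of (iii) follows once the uniform domination is secured, and securing that uniformity via a suitable index set is the step I expect to be hardest.
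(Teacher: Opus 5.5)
\textbf{There is a genuine gap: the uniformity step of (i)$\Rightarrow$(iii) is left unfinished.}

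Your implications (iii)$\Rightarrow$(ii)$\Rightarrow$(i) are fine. Your net indexed by pairs $(F,n)$ also works. Here $F\subseteq\Bo_X$ is finite and nonempty, $n\in\N$, and $x_{(F,n)}=\frac{1}{n|F|}\sum_{x\in F}|x|$. Any subnet bounded by $u$ reaches some $(G,m)$ with $x\in G$, so $|x|\le m|G|u$, and hence $u$ is a strong unit.

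The gap is the uniformity step, which is the real content of (i)$\Rightarrow$(iii). You introduce $z_k\in\Bo_X$ with $\|z_k\|_u\ge k^3$ and a net built from them. You never derive a contradiction from a subnet bounded by some $v$. Your phrase ``$u$ (or any candidate)'' shows that the relation between this new bound $v$ and the unit $u$ is left unresolved. Your closing sentence appeals to ``the net $(\frac1n x)$ indexed as above''. That points back to the $\N\times\Bo_X$ construction, which you yourself showed is useless: a subnet may simply pick $(n,0)$. (Your modified order on $\N\times\Bo_X$ \emph{is} in fact directed, but it fails for the same reason.) So, as written, (i)$\Rightarrow$(iii) is not proved.

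The missing link is one line. Since $u$ is a strong unit and $X$ is Archimedean, $v\le\|v\|_u u$, hence $\|x\|_u\le\|v\|_u\|x\|_v$ for all $x$.

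Now suppose $\sup_{x\in\Bo_X}\|x\|_u=\8$, and pick $z_n\in\Bo_X$ with $\|z_n\|_u\ge n^2$. The \emph{sequence} $\left(\frac1n z_n\right)_{n\in\N}$ is norm-null. Any subnet bounded by $v$ contains terms with arbitrarily large $n$. For those $n$ we get $|z_n|\le nv\le n\|v\|_u u$, i.e.\ $n^2\le n\|v\|_u$, a contradiction. No second coordinate and no $\max(n,k)$ is needed. Together with $\|x\|\le\|u\|\,\|x\|_u$, this gives (iii).

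Once completed, your route genuinely differs from the paper's. The paper proves (i)$\Rightarrow$(ii) by invoking the quasi-subnet result \cite[Proposition 9.5]{bctv}. It then proves (ii)$\Rightarrow$(iii) with the single net $X\backslash\{0\}$ ordered by decreasing norm. The tails of that net are punctured balls, so an order bounded tail immediately yields $r\Bo_X\subseteq[-u,u]$.

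Your finite-subset net replaces the quasi-subnet machinery by forcing every subnet to visit every point of the ball. This makes the argument self-contained. The cost is splitting it into a ``strong unit'' stage and a separate sequential ``uniformity'' stage.
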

\begin{proof}
(iii)$\Rightarrow$(i) is clear.

(i)$\Rightarrow$(ii): Let $\left(x_{\alpha}\right)_{\alpha\in A}\subseteq X$ be norm-null. Every quasi-subnet of this net is also norm-null, and so has an order bounded subnet (see \cite{bctv} for the concept of a quasi-subnet). According to \cite[Proposition 9.5 ]{bctv} $\left(x_{\alpha}\right)_{\alpha\in A}$ has an order bounded tail.\medskip

(ii)$\Rightarrow$(iii): Let $X\backslash \left\{0\right\}$ be considered a net indexed by itself, with the order $x\prec y$ if $\|x\|\ge \|y\|$. Clearly, this net is norm-null, and so it has an order bounded tail. Unpacking the latter observation yields $r>0$ such that $r\Bo_{X}$ is order bounded. Therefore, there is $u\in X_{+}$ such that $\Bo_{X}\subseteq \left[-u,u\right]\subseteq \|u\|\Bo_{X}$. This means that $u$ is a strong unit in $X$, and the norm of $X$ is equivalent to the norm generated by $u$.
\end{proof}

\begin{remark}
The presented results can be easily adapted to the setting of metrizable locally solid spaces. Indeed, the topology of  such a space is determined by a single Riesz pseudo-norm (see \cite[Definition 2.27]{ab0}). The only properties of the norm that we have used in the proofs were subadditivity, $\|nx\|\le n\|x\|$, for $x\in X$ and $n\in\N$ (which also follows from subadditivity), and $\|\frac{1}{n}x\|\to 0$, which follows from linearity of the topology.
\qed\end{remark}

We now compare the conditions considered in this section.

\begin{proposition}\label{compare}
For a normed lattice $X$, consider the following conditions:
\item[(i)] $X$ satisfies the conditions of Theorem \ref{rf} (i.e. is a Banach lattice);
\item[(ii)] $X$ satisfies the conditions of Proposition \ref{arf} (i.e. is majorizing in its norm completion);
\item[(iii)] $X$ satisfies the conditions of Proposition \ref{arfa} (i.e. every norm-null sequence in $X$ has an order bounded subsequence);
\item[(iv)] $X$ satisfies the conditions of Proposition \ref{rfa} (i.e. the norm on $X$ is equivalent to an order unit norm).\medskip

Then, (i),(iv)$\Rightarrow$(ii)$\Rightarrow$(iii), while (i) and (iv) are not comparable (and so (ii)$\not\Rightarrow$(iv) and (ii)$\not\Rightarrow$(i) ).
\end{proposition}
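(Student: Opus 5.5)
We establish the implications with short arguments that invoke the earlier results, and then give two examples for the non-comparability.

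\emph{(i)$\Rightarrow$(ii):} if $X$ is a Banach lattice, then $\widetilde{X}=X$, and $X$ is trivially majorizing in itself. Alternatively, condition (iv) of Proposition \ref{arf} follows from condition (iv) of Theorem \ref{rf}: given a norm-summable $(x_n)$, the sums $\sum_{k\le n}|x_k|$ have a supremum, which bounds every $x_n$.

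\emph{(iv)$\Rightarrow$(ii):} we verify condition (iv) of Proposition \ref{arf}. If $u$ is a strong unit with $\|\cdot\|\ge c\|\cdot\|_u$, then a norm-summable sequence is in particular norm-bounded, so $|x_n|\le c^{-1}\sup_k\|x_k\|\,u$ for every $n$. Even more directly, condition (ii) of Proposition \ref{rfa} shows that every norm-Cauchy sequence, being norm-bounded, lies in a multiple of $[-u,u]$.

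\emph{(ii)$\Rightarrow$(iii):} a norm-null sequence is norm-Cauchy, so condition (ii) of Proposition \ref{arf} directly gives an order bounded subsequence, which is condition (i) of Proposition \ref{arfa}.

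For non-comparability we need two examples.
\begin{itemize}
\item[$\bullet$] A Banach lattice without an order unit norm: take $\ell^1$, or any infinite-dimensional $L^p$ with $p<\infty$. In such a space there is no strong unit whose order interval absorbs the unit ball. For instance, in $\ell^1$ the vectors $e_n$ have norm $1$, and an upper bound $u\ge e_n$ for all $n$ would satisfy $u\ge\sum_{n\le N}e_n$, forcing $\|u\|\ge N$ for every $N$.
\item[$\bullet$] A normed lattice with an order unit norm that is not complete: take the space of all finitely-valued (simple) functions, or the eventually constant sequences, with the sup norm. The constant $\1$ is a strong unit, the sup norm coincides with $\|\cdot\|_{\1}$, and the space is not closed in $\ell^\infty$, since for example $\sum 2^{-n}e_n$ is a limit of eventually constant sequences that is not eventually constant.
\end{itemize}

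The parenthetical consequences then follow formally. Since (iv)$\Rightarrow$(ii) and (iv)$\not\Rightarrow$(i), we get (ii)$\not\Rightarrow$(i). Since (i)$\Rightarrow$(ii) and (i)$\not\Rightarrow$(iv), we get (ii)$\not\Rightarrow$(iv).

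The only step that needs any care is checking the examples: for $c_{00}$-type alternatives one must make sure the chosen space really has a strong unit. The argument is otherwise routine, and I expect no real obstacle.
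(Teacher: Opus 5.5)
Your proof is correct and follows the paper's route. The implications come directly from Theorem \ref{rf} and Propositions \ref{arf}, \ref{rfa}, and non-comparability is witnessed by $\ell_{1}$ and by a dense proper sublattice of a $\Co\left(K\right)$-space; your eventually constant sequences, which are dense in $c$, play the role of the paper's piecewise affine functions in $\Co\left[0,1\right]$. One small slip: your ``even more directly'' remark actually uses condition (iii) of Proposition \ref{rfa} (that $\Bo_{X}$ lies in a multiple of $\left[-u,u\right]$), not condition (ii).
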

\begin{proof}
(i)$\Rightarrow$(ii) is trivial, while (iv)$\Rightarrow$(ii) is easy to see. For (iv)$\not\Rightarrow$(i) take a dense proper sublattice of $\Co\left(K\right)$, e.g. the space $F$ of piecewise affine functions inside $\Co\left[0,1\right]$, while for (i)$\not\Rightarrow$(iv) take $\ell_{1}$. In fact, an $\ell_{1}$ sum of a sequence of copies of $F$'s is a normed lattice which satisfies (ii), but neither (i) nor (iv). Finally, (ii)$\Rightarrow$(iii) follows from Proposition \ref{arf}.
\end{proof}

The only implication that has not been addressed by Proposition \ref{compare} is whether (ii)$\Rightarrow$(iii) is strict (this is also the content of \cite[Question 8.17]{bt}). The rest of the article is dedicated to partially answering this question. Recall that an \emph{ideal on $\N$} is a collection of subsets of $\N$ which is closed with respect to taking finite unions and subsets. An ideal is \emph{meager} if it is meager as a subset of $\left\{0,1\right\}^{\N}$. An ideal is a \emph{P-ideal} if it is a proper ideal which contains all finite sets, and whose image in $\Po\left(\N\right)\slash fin\left(\N\right)$ generates a P-set in $\beta \N\backslash \N$. Note $\Io$ is a P-ideal if and only if whenever $\left(A_{n}\right)_{n\in\N}\subseteq \Io$, there is $A\in\Io$ such that $A_{n}\backslash A$ is finite, for every $n\in\N$. Furthermore, $\Io$ is a non-meager P-ideal if and only if whenever $\left(A_{n}\right)_{n\in\N}\subseteq\Io$ is such that $A_{n}\subseteq\N\backslash\left\{1,...,n-1\right\}$, for every $n\in\N$, there is an infinite $M\subseteq \N$ such that $\bigcup\limits_{n\in M}A_{n}\in\Io$ (see \cite[Proposition 2.3]{jmps}). We will be using the latter characterization as the definition. Non-meager P-ideals exist under various set-theoretical assumptions (see \cite{jmps}), and in particular, they exist under the Continuum Hypothesis, but it is still an open problem whether their existence can be proven in ZFC (see also \cite{kmz} and \cite{hern} for more information). It turns out that this old set-theoretical question is equivalent to \cite[Question 8.17]{bt}.

\begin{theorem}
Existence of a non-meager P-ideal is equivalent to existence of a normed lattice such that every norm-null sequence has an order bounded subsequence, but which is not majorizing in its norm completion. In particular, such a normed lattice exists under the Continuum Hypothesis.
\end{theorem}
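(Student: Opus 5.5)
The plan is to prove the two directions separately, in both cases using the characterization of non-meager P-ideals fixed before the statement. The link between ideals and lattices is the pair of conditions in Proposition~\ref{arf}(iv) and Proposition~\ref{arfa}(i).

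\emph{From a lattice to an ideal.} Let $X$ satisfy Proposition~\ref{arfa}(i) but not Proposition~\ref{arf}. By Proposition~\ref{arf}(iv) there is a norm-summable sequence that is not order bounded. Passing to positive and negative parts, I expect to obtain a norm-summable $\left(y_{n}\right)_{n\in\N}\subseteq X_{+}$ which is not order bounded. If $(y_n)$ were order bounded, then $(|y_n|)$ would be order bounded and hence so would the original sequence; so one of $(y_n^+)$, $(y_n^-)$ is unbounded, and it is norm-summable. After thinning out I will assume $\|y_{n}\|\le 8^{-n}$. For $A\subseteq\N$ put $y_{A}:=\sum_{n\in A}y_{n}\in\widetilde{X}$, and let
\[
\Io:=\left\{A\subseteq\N:~ y_{A}\le u \text{ for some } u\in X\right\}.
\]
This is clearly closed under subsets and finite unions and contains all finite sets. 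It is proper, since $\N\in\Io$ would give an upper bound in $X$ for all partial sums, hence for $(y_n)$.

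For the non-meager P-property, let $A_{n}\in\Io$ with $A_{n}\subseteq\N\backslash\{1,\dots,n-1\}$. Then $\|y_{A_{n}}\|\le\sum_{k\ge n}8^{-k}$ is tiny. I want majorants $u_{n}\in X$ of $y_{A_{n}}$ with $\|u_{n}\|\le 4^{-n}$, say. The sequence $\left(2^{n}u_{n}\right)$ is then norm-null, so Proposition~\ref{arfa}(i) gives an infinite $M$ and $w\in X$ with $2^{n}u_{n}\le w$ for $n\in M$. Summing gives
\[
y_{\bigcup_{n\in M}A_{n}}\le\sum_{n\in M}y_{A_{n}}\le \sum_{n\in M}u_{n}\le w,
\]
so $\bigcup_{n\in M}A_{n}\in\Io$.

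The delicate step is producing a \emph{small} majorant $u_{n}\in X$ of $y_{A_{n}}$ from an arbitrary one $v\ge y_{A_{n}}$. The first attempt is to pick $x\in X$ with $\|x-y_{A_{n}}\|$ tiny and split $y_{A_{n}}$ into a finite part $\sum_{k\in A_{n},k\le N}y_{k}\in X$ and a tail whose norm is much smaller still. One then corrects $v\wedge(\cdot)$ by the finite part, using a lattice-type estimate such as $\|v\wedge(x+e)\|$ with a small $e\in X_{+}$. If that fails, I would instead build the weights into the definition of $\Io$ (for instance require $\sum_{n\in A}2^{n}y_{n}$ to be majorized). Smallness of the majorant then comes from scaling rather than from approximation, much as in the step (iv)$\Rightarrow$(iii) of Proposition~\ref{arf}.

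\emph{From an ideal to a lattice.} Given a non-meager P-ideal $\Io$, I would take, with the sup-norm,
\[
X:=\left\{x\in c_{0}:~\supp x\in\Io\right\}.
\]
This is a sublattice, and its completion is
\[
\widetilde{X}=\left\{x\in c_{0}:~\{|x|>\varepsilon\}\in\Io \text{ for all } \varepsilon>0\right\}.
\]
It is not majorizing: partition $\N$ into consecutive finite intervals $J_{k}$ and let $y=\frac1k$ on $J_{k}$. Then $y\in\widetilde{X}$, but any majorant of $y$ has full support $\N\notin\Io$.

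For Proposition~\ref{arfa}(i), let $x_{n}\to0$ in $X$ and use the P-property to find $A\in\Io$ with $\supp x_{n}\backslash A$ finite for all $n$. The parts of $x_{n}$ on $A$ are dominated by $\bigvee_{n}|x_{n}|\1_{A}$. This lies in $c_{0}$, because only finitely many $n$ have $\|x_{n}\|>\varepsilon$, and it is supported in $A$. The remaining finite pieces $F_{n}:=\supp x_{n}\backslash A$ are where I expect the main obstacle. Their union along a subsequence must lie in $\Io$, and the non-meager characterization only handles sets avoiding $\{1,\dots,n-1\}$. I would first try a diagonal/pigeonhole argument on the initial segments of the $F_{n}$ combined with the non-meager property applied to $F_{n}\backslash\{1,\dots,n-1\}$. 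If that does not close up, I would modify $X$, for example to sequences in $c_{0}$ whose support lies in $\Io$ only modulo a set on which they decay fast enough, so that such small stray coordinates are absorbed.

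The CH statement then follows from the existence of non-meager P-ideals under CH cited from \cite{jmps}.
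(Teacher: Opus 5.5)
Both directions have a genuine gap. In each case the obstacle you flag breaks the construction as proposed; it is not a technicality.

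\textbf{From an ideal to a lattice.} Your space $X=\{x\in c_0:\ \supp\, x\in\Io\}$ fails the required property. Take $x_n:=\frac1n\1_{\{1,\dots,n-1\}}$. It is finitely supported, so $x_n\in X$, and $\|x_n\|_\infty\to0$. But any $u$ dominating infinitely many $x_n$ is strictly positive at every coordinate, so $\supp\, u=\N\notin\Io$. No diagonal argument can repair this, because every infinite union of the sets $\{1,\dots,n-1\}$ is $\N$.

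What works is essentially your fallback, carried out as in the paper. Take $X=G+H$, where $G$ consists of the bounded sequences supported in $\Io$ and $H=\{h\in\ell_\8:\ 2^kh(k)\to0\}$. Reduce to $0\le f_n$ with $\|f_n\|_\infty\le4^{-n}$, and write $f_n=g_n+h_n$ with $g_n\in G$, $h_n\in H_+$.
\begin{itemize}
\item The values of $f_n$ on $\{1,\dots,n-1\}$ satisfy $2^kf_n(k)\le2^{-n}$, so they can be put into $H$.
\item The $G$-part is placed on $B_n=(\supp\, g_n\cup[n,N_n])\setminus[1,n-1]\in\Io$, where $N_n$ is chosen so that $2^kh_n(k)<2^{-n}$ for $k\ge N_n$. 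Since $B_n$ avoids $\{1,\dots,n-1\}$, the non-meager P-property gives an infinite $M$ with $\bigcup_{n\in M}B_n\in\Io$.
\item The leftover parts $v_n$ satisfy $2^kv_n(k)\to0$ and $\sup_k2^kv_n(k)\le2^{-n}$, which is exactly what makes $\sum_{n\in M}v_n$ belong to $H$.
\end{itemize}
None of these estimates appear in your proposal.

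\textbf{From a lattice to an ideal.} The step you call delicate cannot be carried out for your ideal of majorized sets. Membership $A\in\Io$ only gives some majorant of $y_A$ in $X$, with no control of its norm. Passing to $A\cap[n,\8)$ need not make that majorant smaller.
\begin{itemize}
\item Your first fix is circular. For $v\wedge(x+e)$ to dominate $y_{A_n}$ you need $e\ge(y_{A_n}-x)^+$, i.e.\ a small majorant in $X$ of a small element of $\widetilde{X}$. That is precisely what fails when $X$ is not majorizing.
\item The weighted fix only gives $y_A\le2^{-\min A}v$, with $\|v\|$ again uncontrolled.
\end{itemize}
The claim itself is false, not merely unproved. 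Consider $G+H+\mathbb{R}\sigma$ with $\sigma(k)=2^{-k}$, normed by $\|x\|_\infty$ plus the modulus of the (well defined) $\sigma$-coefficient. There, for every $n$, the family $\{2^{-k}\1_{\{k\}}\}_{k\ge n}$ is majorized by $\sigma$, yet each of its majorants has norm at least $1$. Now take an $\ell_1$-sum of infinitely many copies of such a piece, with the weights $2^{\pm k}$ replaced by $8^{\pm k}$ to match your normalization. This lattice satisfies the hypotheses of the theorem, but your ideal is not a non-meager P-ideal: the tails of the coordinate sets of the copies belong to it, while no infinite union of them does.

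The missing idea is the paper's. Fix a positive, non-order-bounded sequence $(e_i)$ with $\|e_i\|\le2^{-i}$ and set $\mu(A)=\inf\{\|x\|:\ x\in X_+,\ x\ge e_i\ \forall i\in A\}$. Replace the ideal $\Jo$ of majorized sets by its order continuous part $\{A:\ \mu(A\cap[n,\8))\to0\}$.
\begin{itemize}
\item Its members satisfy $\mu(A)\le2^{1-\min A}$, so small majorants come for free.
\item Proposition~\ref{arfa}(i) then yields $x$ with $\mu\bigl(\bigcup_{m\in M,\,m\ge n}A_m\bigr)\le\|x\|/n$.
\item The order continuous part is closed in $\Jo$ for the pseudometric $\mu(A\vartriangle B)$, so $\bigcup_{m\in M}A_m$ lies in it again.
\end{itemize}

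Finally, to reach $\|y_n\|\le8^{-n}$ you should group consecutive terms into blocks rather than pass to a subsequence. A subsequence of a sequence that is not order bounded may well be order bounded.
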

\begin{proof}
Necessity: We will view sequences as functions on $\N$. Let $\Io$ be a non-meager P-ideal. Let $G$ be the set of all bounded sequences whose support is an element of $\Io$, and let $H:=\left\{h\in\ell_{\8},~ 2^{k}h\left(k\right)\to 0\right\}$. Note that both $G$ and $H$ are ideals in $\ell_{\8}$. Hence, $X:=G+H$ is also an ideal in $\ell_{\8}$, and in particular, it is order complete. We will show that $\left(X,\|\cdot\|_{\8}\right)$ satisfies the required properties.

Let $e_{n}:=\1_{\left\{n\right\}}\in X$, for every $n\in\N$. We claim that $\left(\frac{1}{2^{n}}e_{n}\right)_{n\in\N}$ is not order bounded in $X$. Assume that $g\in G$ and $h\in H$. There is $n\in\N$ such that $h\left(k\right)<\frac{1}{2^{k}}$, for every $k\ge n$. Since $\supp~ g$ is not co-finite there is $k\ge n$ such that $k\notin\supp~ g$. Then, $\left[g+h\right]\left(k\right)=h\left(k\right)<\frac{1}{2^{k}}=\frac{1}{2^{k}}e_{k}\left(k\right)$, and so $g+h\not\ge \frac{1}{2^{k}}e_{k}$. It now follows from Proposition \ref{arf} that $X$ is not majorizing in its norm completion.\medskip

In order to establish the second property, it is enough to prove that if $\left(f_{n}\right)_{n\in\N}\subseteq X$ is such that $\|f_{n}\|_{\8}\le\frac{1}{4^{n}}$, for every $n\in\N$, then this sequence is order bounded. Without loss of generality we may also assume that $f_{n}\ge 0$, for every $n\in\N$.

Fix $n\in\N$. There is $g_{n}\in G$ such that $h_{n}:=f_{n}-g_{n}\in H_{+}$. Then, there is $N_{n}\in\N$ such that $N_{n}\ge n$ and $2^{k}h_{n}\left(k\right)<\frac{1}{2^{n}}$, for every $k\ge N_{n}$. Let $A_{n}:=\supp~ g_{n}\in\Io$, and let $B_{n}:=\left(A_{n}\cup\left[n,N_{n}\right]\right)\backslash\left[1,n-1\right] \in\Io$. Let $u_{n}:=f_{n}\1_{B_{n}}\in G$ and let $v_{n}:=f_{n}-u_{n}$. Also, define $w_{n}$ by $w_{n}\left(k\right):=2^{k}v_{n}\left(k\right)$, for $k\in\N$. Let us show that $v_{n}\in H$ (which is equivalent to $w_{n}\in c_{0}$) and that $2^{k}v_{n}\left(k\right)\le 2^{-n}$, for every $k\in\N$ (which is equivalent to $\|w_{n}\|_{\8}\le 2^{-n}$).

First, if $k\in B_{n}$, then $u_{n}\left(k\right)=f_{n}\left(k\right)$, and so $v_{n}\left(k\right)=0$. If $k\notin B_{n}$, then either $k<n$, or $k>N_{n}$. In the former case, we have $v_{n}\left(k\right)=f_{n}\left(k\right)\le 4^{-n}$, and so $2^{k}v_{n}\left(k\right)\le 2^{k-2n}\le 2^{-n}$. In the latter case we also have that $k\notin A_{n}$, hence $g_{n}\left(k\right)=0$, and so $v_{n}\left(k\right)=f_{n}\left(k\right)=h_{n}\left(k\right)\le 2^{-k-n}$, implying $2^{k}v_{n}\left(k\right)\le 2^{-n}$. Also, for $k>N_{n}$ either $k\in A_{n}$, and then $v_{n}\left(k\right)=0$, or $k\notin A_{n}$, and then $v_{n}\left(k\right)= h_{n}\left(k\right)$. In both cases $2^{k}v_{n}\left(k\right)\le 2^{k}h_{n}\left(k\right)\to 0$, so that $v_{n}\in H$.\medskip

By the assumption about $\Io$, there is an infinite $M\subseteq\N$ such that $B:=\bigcup\limits_{n\in M}B_{n}\in\Io$. Since $\|u_{n}\|_{\8}\le \|f_{n}\|_{\8}\le 4^{-n}$, for every $n\in\N$, there is $u:=\sum\limits_{n\in M}u_{n}$ in $\ell_{\8}$. Moreover, $\supp~ u\subseteq B$, and so $u\in G$. Clearly, also $u\ge u_{n}$, for every $n\in M$.\medskip

Since $\|v_{n}\|_{\8}\le \|f_{n}\|_{\8}\le 4^{-n}$, for every $n\in M$, there is $v=\sum\limits_{n\in M}v_{n}$ in $\ell_{\8}$. On the other hand, as $w_{n}\in c_{0}$ and $\|w_{n}\|_{\8}\le 2^{-n}$, for every $n\in M$, there is $w=\sum\limits_{n\in M}w_{n}$ in $c_{0}$. Note that for every $k\in\N$ we have that $w\left(k\right)=\sum\limits_{n\in M}w_{n}\left(k\right)=\sum\limits_{n\in M}2^{k}v_{n}\left(k\right)=2^{k}v\left(k\right)$, hence, $2^{k}v\left(k\right)=w\left(k\right)\to 0$, implying that $v\in H$. Clearly, also $v\ge v_{n}$, for every $n\in M$. Thus, $u+v\in X$ and $u+v\ge u_{n}+v_{n}=f_{n}$, for every $n\in M$.\bigskip

Sufficiency: Assume that $X$ is a normed lattice such that every norm-null sequence contains an order bounded subsequence, but it is not majorizing in its norm completion. A careful look at the proof of the implication (iv)$\Rightarrow$(iii) in Proposition \ref{arf} reveals that there is a positive sequence $\left(e_{n}\right)_{n\in\N}$ with $\|e_{n}\|\le 2^{-n}$, for all $n\in\N$, which is not order bounded. We will now construct a non-meager P-ideal on $\N$.\medskip

For $A\subseteq \N$ let $\mu\left(A\right):=\bigwedge\left\{\|x\|,~ x\in X_{+},~ \forall i\in A:~ x\ge e_{i}\right\}$. Let us show that $\mu$ is a submeasure on $\Po\left(\N\right)$ (i.e. a subadditive monotone function from $\Po\left(\N\right)$ into $\left[0,\8\right]$ which vanishes at $\varnothing$). Clearly, $\mu\left(\varnothing\right)=0$ and if $A\subseteq B$, then $\mu\left(A\right)\le \mu\left(B\right)$. Assume that $A,B\subseteq \N$ are such that $\mu\left(A\right),\mu\left(B\right)<\8$, and let $\varepsilon>0$. There are $x,y\in X_{+}$ such that $\|x\|\le\mu\left(A\right)+\varepsilon$, $\|y\|\le\mu\left(B\right)+\varepsilon$, $x\ge e_{i}$ and $y\ge e_{j}$, for every $i\in A$ and $j\in B$. Then, it is easy to see that $x+y\ge e_{i}$, for every $i\in A\cup B$, and so $\mu\left(A\cup B\right)\le \|x+y\|\le \|x\|+\|y\|\le \mu\left(A\right)+\mu\left(B\right)+2\varepsilon$. As $\varepsilon$ was arbitrary, subadditivity follows.\medskip

Note that if $A\subseteq\N$ is finite, then $\mu\left(A\right)\le \left\|\sum\limits_{i\in A}e_{i}\right\|\le \sum\limits_{i\in A}\left\|e_{i}\right\|\le 2^{1-\min A}$. On the other hand, $\mu\left(\N\right)=\8$.

Let $\Jo:=\left\{A\subseteq\N,~ \mu\left(A\right)<\8\right\}$, which is clearly an ideal in $\Po\left(\N\right)$, and let $\Io$ be the order continuous part of $\Jo$, i.e. $\Io:=\left\{A\subseteq\N,~ \mu\left(A^{n}\right)\to 0\right\}$, where $A^{n}:=A\cap\left[n,\8\right)$. It is easy to see that $\Io$ is also an ideal, $\N\notin\Io$, but every finite set belongs to $\Io$. Moreover, if $A\in\Io$, then $\mu\left(A\right)\le 2^{1-n}$, where $n:=\min A$. Indeed, for every $\varepsilon>0$ there is $N\in\N$ such that $\mu\left(A^{N}\right)<\varepsilon$; then $\mu\left(A\right)\le \mu\left(\left[n,N\right]\right)+\mu\left(A^{N}\right)\le 2^{1-n}+\varepsilon$.\medskip

Let us show that $\Io$ is closed in $\Jo$ with respect to the pseudometric $\left(A,B\right)\mapsto\mu\left(A\vartriangle B\right)$. Let $A\in\Jo$ be such that for every $\varepsilon>0$ there is $B\in\Io$ such that $\mu\left(A\vartriangle B\right)<\varepsilon$. There is $N\in\N$ such that $\mu\left(B^{n}\right)<\varepsilon$, for every $n\ge N$. Then, since $A^{n}\subseteq B^{n}\cup\left(A\vartriangle B\right)$, it follows that $\mu\left(A^{n}\right)\le \mu\left(B^{n}\right) + \mu\left(A\vartriangle B\right)\le 2\varepsilon$ for every $n\ge N$. As $\varepsilon$ was arbitrary, we conclude that $A\in\Io$.\medskip

Now let $\left(A_{n}\right)\subseteq\Io$ be such that $A_{n}\subseteq\N\backslash\left\{1,...,n-1\right\}$, for every $n\in\N$. It follows from the previous step that for every $n\in\N$ we have $\mu\left(A_{n}\right)\le 2^{1-n}$, and so there is $x_{n}\in X_{+}$ with $\|x_{n}\|\le 2^{2-n}$ such that $x_{n}\ge e_{i}$, for every $i\in A_{n}$. Then, $nx_{n}\to 0$, and so by assumption there is an infinite $M\subseteq\N$ and $x\in X$ such that $nx_{n}\le x$, for every $n\in M$. It is enough to show that $\bigcup\limits_{m\in M}A_{m}\in\Io$.

For $n\in\N$ let $B_{n}:=\bigcup\limits_{m\in M\cap\left[n,\8\right)}A_{m}$; note that if $i\in B_{n}$, then there is $m\in M\cap\left[n,\8\right)$ such that $i\in A_{m}$, and so $e_{i}\le x_{m}\le \frac{1}{m}x \le \frac{1}{n}x$. It follows that $\mu\left(B_{n}\right)\le \frac{1}{n}\|x\|$, for every $n\in\N$. In particular $\bigcup\limits_{m\in M}A_{m}=B_{1}\in\Jo$. Also,  $B_{1}\vartriangle \bigcup\limits_{m\in M\cap\left[1,n-1\right]}A_{m}\subseteq B_{n}$ implies that $\Io\ni \bigcup\limits_{m\in M\cap\left[1,n-1\right]}A_{m}\to B_{1}$. As $\Io$ is closed in $\Jo$ we conclude that $B_{1}\in\Io$.
\end{proof}



\end{document}